\def\R{\mathbb{R}}
\newcommand{\be}{\begin{equation}}
\newcommand{\ee}{\end{equation}}
\newtheorem{thm}{Theorem}[section]
\newtheorem{cor}[thm]{Corollary}
\newtheorem{lem}[thm]{Lemma}
\theoremstyle{definition}
\theoremstyle{remark}
\newcounter{labelflag} \setcounter{labelflag}{0}
\newcommand{\Label}[1]{
                       \ifnum\thelabelflag=1
                          \ifmmode
                             \makebox[0in][l]{\qquad\fbox{\rm#1}}
                          \else
                             \marginpar{\vspace{0.7\baselineskip}
                                        \hspace{-1.1\textwidth}
                                        \fbox{\rm#1}}
                          \fi
                       \fi
                       \label{#1}
                      }
\begin{document}

\baselineskip=1.3\baselineskip

\begin{titlepage}
\title{\Large\bf   Almost Periodic Dynamics of Perturbed Infinite-Dimensional Dynamical Systems}
\vspace{7mm}

\author{
  Bixiang Wang  \thanks {Supported in part by NSF  grant DMS-0703521}
\vspace{1mm}\\
Department of Mathematics, New Mexico Institute of Mining and
Technology \vspace{1mm}\\ Socorro,  NM~87801, USA \vspace{1mm}\\
Email:    bwang@nmt.edu}
\date{}
\end{titlepage}

\maketitle

\noindent
{\bf Abstract}.
 This paper is  concerned with the dynamics of an infinite-dimensional gradient system
under small almost periodic perturbations. Under the assumption
 that the original autonomous system has a
global attractor given as the union of unstable manifolds of a finite number of
hyperbolic equilibrium solutions, we prove that
the perturbed non-autonomous system has exactly the same number of almost periodic solutions.
As a consequence,
  the pullback attractor of the perturbed system is given by the union of unstable
manifolds of  these finitely many almost periodic solutions.
   An application of the result to the  Chafee-Infante
     equation is discussed.
  \vspace{6mm}

\noindent
{\bf Key words.}  Almost periodic solution,  hyperbolic solution, pullback attractor.\\
{\bf MSC 2000.}  37L30; Secondary: 35B40, 35B41.

 \section{Introduction}
\setcounter{equation}{0}

In this paper, we study the dynamics of an infinite-dimensional gradient system
under almost periodic perturbations. Let $A_0$ be a sectorial operator in a Banach
space $X$ and  $X^\alpha$ be the fractional powers of $X$  with
 $0\le \alpha <1$.
Consider the autonomous nonlinear equation
\be \label{intro1}
{\frac {dx}{dt}} +A_0 x = f(x), \quad \ x(0) = x_0,
\ee
where $f: X^\alpha \to  X$ is  locally Lipschitz continuous.
   Suppose for each $x_0 \in X^\alpha$,
the initial-value problem \eqref{intro1}
has a unique solution $x \in C([0, \infty); X^\alpha)$.
Assume further that equation \eqref{intro1}  has a Liapunov function
and only a finite number of equilibrium solutions $x^*_i$, $1\le i\le n$.
If  problem \eqref{intro1} has a global attractor $\cal{A}$,  then it is well
know  (see, e,g., \cite{hal1} )  that  $\cal{A}$ is given by  the union of unstable manifolds
of  all the equilibrium solutions, i.e.,
\be
\label{att}
{\cal{A}} = \bigcup_{i=1}^n W^u (x_i^*),
\ee
where $W^u (x^*_i)$ is the unstable manifold of $x_i^*$.

In this paper,  we  want to explore the effect of almost periodic perturbations
on the structure of the attractor $\cal{A}$.
More precisely, given $\epsilon >0$, consider  the non-autonomously perturbed
equation
\be
\label{intro2}
{\frac {dx}{dt}} +A_0 x = f(x) + g_\epsilon (t,x), \quad \ x(\tau) = x_0,
\ee
where $g_\epsilon (t,x): \R \times X^\alpha
\to  X$ is an almost periodic function
in $t$ uniformly in $x$, and $g_\epsilon  \to 0 $
in a sense (which will be made clear  in the next section).
Under certain conditions, we will prove that, for every small $\epsilon>0$,
problem \eqref{intro2}     has
exactly
 $n$ almost periodic solutions
$\phi^*_{i, \epsilon}$, $i\le i \le n$, and each  $\phi^*_{i, \epsilon}$
corresponds  to an equilibrium solution $x^*_i$  of problem \eqref{intro1}.
This result along with  \cite{car1}  implies  that the pullback attractor
$\{{\cal{A}}_\epsilon (t)\}_{t\in \R}$  of problem \eqref{intro2}
can be characterized by
\be\label{attp}
{\cal{A}}_\epsilon (t) =\bigcup_{i=1}^n W^u_\epsilon (\phi^*_{i, \epsilon} ) (t),
\qquad t \in \R,
\ee
where
$W^u_\epsilon (\phi^*_{i, \epsilon} ) $
is the unstable manifold of the almost periodic solution
$\phi^*_{i,\epsilon}$,
which will be defined in Section 2.
By \eqref{att} and \eqref{attp} we see that
the structure of the attractor $\cal{A}$ is preserved under a small
almost periodic perturbation, and the almost periodic solutions
$\phi^*_{i,\epsilon}$
play  exactly the  same role  on the dynamics  of the perturbed system
as  the equilibrium solutions $x^*_i$ do on the autonomous equation.
In this sense, almost periodic solutions are appropriate extension
of equilibrium solutions to almost periodic systems.
In particular, when the perturbation $g_\epsilon$ is time-periodic, it can be
proved that the almost periodic solutions $\phi^*_{i, \epsilon}$
are actually  periodic solutions. In this case, the dynamics of the corresponding
periodic systems  is  completely governed by  a set of  finitely many periodic solutions.

Closely  related to the present paper, a more general non-autonomous perturbation
problem was recently  studied by Carvalho et. al. in \cite{car1, car2}, where the
perturbation $g_\epsilon$  was not assumed to be almost periodic  in time.
In that case, the authors  proved that,  for each  $i$ with $1\le i \le n$
and each small $\epsilon>0$,  the equation \eqref{intro2} has a complete
solution $\xi^*_{i, \epsilon}$ (i.e., $\xi^*_{i, \epsilon}$ is defined for all $t \in \R$)
which corresponds to  the equilibrium solution $x^*_i$ of equation \eqref{intro1}.
Further, the authors proved that
the pullback attractor
$\{{\cal{A}}_\epsilon (t)\}_{t\in \R}$ of the perturbed equation  is given  by
\be\label{attp2}
{\cal{A}}_\epsilon (t) =\bigcup_{i=1}^n W^u_\epsilon (\xi^*_{i, \epsilon} ) (t),
\qquad t \in \R,
\ee
where
$W^u_\epsilon (\xi^*_{i, \epsilon} ) $
is the unstable manifold of  $\xi^*_{i,\epsilon}$.
This is a  remarkable  result on the exact structure
of pullback attractors of  an   infinite-dimensional
non-autonomous  system.
Note that, in the nontrivial case, the  non-autonomous equation
\eqref{intro2} has infinitely many complete solutions. In fact,
every  solution   starting from  the attractor  is a complete solution.
The authors of \cite{car1} successfully identified a finite number of
complete solutions which are crucial in determining the structure of the
pullback attractor.
In this paper,  we will prove that the complete solution
$\xi^*_{i, \epsilon}$ ($1\le i\le n$)  founded  in \cite{car1}
are actually almost periodic solutions when the perturbation
$g_\epsilon$ is uniformly almost periodic.
We further demonstrate that  problem \eqref{intro2}  has no other
almost periodic solutions except those  $\xi^*_{i, \epsilon}$.
The convergence of solutions  of the perturbed equation was also established
in  \cite{car1}. Particularly,  they proved that every solution
of the equation converges to one of the complete bounded
solutions $ \xi^*_{i, \epsilon}$ as $t \to \infty$.

The solutions and dynamics of almost periodic  differential equations
have long
been  investigated in the literature, see, e.g.,
\cite{bal1, che1, fin1,  sel1,  she1, she2, she3, she4, vui1,  war1, yos1, zai1}.
Particularly, the $\omega$-limit sets of almost periodic equations have
been studied by  the authors of \cite{sel1, she1, she2, she3, she4}.

In the next section, we will recall some
concepts on sectorial operators and uniformly almost periodic
functions. We will present our main results in this section.
Section 3 is devoted to  the  proof of the results. We first show the
existence of almost periodic solutions  for the non-autonomously perturbed
equation, and then  prove the number of  almost periodic solutions is finite.
In the last section, we discuss an application of our results  to
the Chafee-Infante equation under almost periodic perturbations.

\section{Notation and Main Results}
\setcounter{equation}{0}

Let $A_0$ be a sectorial operator  in a
  Banach space  $X$ with norm $\| \cdot \|$. Suppose there is
  a number $a\in \R$ such that the fractional powers
  $ (A_0+ a I)^\alpha$ are well defined for all
  $\alpha \in \R$.  Set $A_1 = A_0 + aI$  and
  $X^\alpha = D(A^\alpha_1)$,  the domain of $A_1^\alpha$,
  for  $\alpha \ge 0$.
  Then it is known (see, e.g., \cite{hen1})
   that  $X^\alpha$ is a Banach space with
  the graph norm  $\|x\|_\alpha = \| A_1^\alpha \|$
  for $x \in D(A_1^\alpha)$.
  From now on, we assume
  $\alpha \in [0, 1)$ and $f: X^\alpha \to  X$ is a
  continuously
  differentiable function that is Lipschitz  continuous in bounded
  subsets of $X^\alpha$.  Consider the   nonlinear   autonomous
  equation
  \be \label{autoeq}
{\frac {dx}{dt}} +A_0 x = f(x), \quad  t>0 \qquad \mbox{and} \quad \ x(0) = x_0 \in X^\alpha .
\ee
Suppose Problem \eqref{autoeq} is  well-posed in $X^\alpha$, that is,
for each $x_0 \in X^\alpha$ the initial-value problem has a unique solution
$x \in C([0,\infty); X^\alpha)$ which depends continuously  on initial data
$x_0$ in $X^\alpha$. Let
$\{S(t)\}_{t \ge 0}$ be the evolution semigroup  generated by problem
\eqref{autoeq}, that is, for every $t\ge 0$ and $x_0 \in X^\alpha$,
$S(t)x_0 = x(t, x_0)$,   the solution of equation \eqref{autoeq}
 at time $t$ with initial condition  $x_0$.
 If $x_0^* \in X^\alpha$   and $S(t)x_0^* = x_0^*$ for all $t\ge 0$, then
 $x_0^*$ is called an equilibrium solution  of
$\{S(t)\}_{t \ge 0}$.  An  equilibrium solution $x_0^*$
is said to be hyperbolic  if   the spectrum of $A= A_0 - f^\prime (x_0^*)$
does not intersect  the imaginary axis, and  there is a projection
$P: X \to  X$ such that the following conditions are satisfied,
for some positive numbers $M$ and $\beta$:
\be\label{dichp1}
e^{-At} P = P e^{-At}, \quad \forall\  t \ge 0.
\ee
 \be\label{dichp2}
 \| e^{-At} P x \| \le Me^{\beta t}\| x\|, \quad  \forall \ x\in X, \  \forall \ t \le 0.
 \ee
  \be\label{dichp3}
  \| e^{-At} (I-P) x \| \le Me^{-\beta t}\| x\|, \quad  \forall \ x\in X, \  \forall \ t \ge 0.
  \ee
   Then it follows from  Lemma 7.6.2 in \cite{hen1} that there is a positive number
  $M_1$ such that for all $x \in X$:
  \be
  \label{dichp}
  \| e^{-A(t-s)} Px \|_\alpha \le M_1 e^{\beta (t-s)} \|x\|,
  \quad \forall \  t < s,
  \ee
  \be
  \label{dichq}
  \| e^{-A(t-s)} (I-P)x \|_\alpha \le M_1 e^{-\beta (t-s)}
  \max \{1, (t-s)^{-\alpha}  \} \|x\|,
  \quad \forall \  t > s.
  \ee

  Throughout this paper, we assume that $\{S(t)\}_{t \in \R}$ has the following properties:

  (H1)  \  $\{S(t)\}_{t \in \R}$ has a global attractor $\cal{A}$ in $X^\alpha$.

  (H2) \   $\{S(t)\}_{t \in \R}$ has only a finite number of hyperbolic equilibrium solutions
  $x^*_i$, $1\le i \le n$.

  (H3)  \  $\{S(t)\}_{t \in \R}$ has a Liapunov function in $X^\alpha$.

  Under these conditions
   it is well known (see, e.g., \cite{hal1})  that
  the attractor $\cal{A}$ has the structure
  \be
  \label{autoatt}
  {\cal{A}} = \bigcup_{i=1}^n W^u (x^*_i),
  \ee
  where  $W^u (x^*_i)$ is the unstable manifold of $x^*_i$.
   We intend to examine the behavior of this attractor under
   small almost periodic perturbations.
   Recall that a continuous function
   $g: \R \to X$ is called  an almost periodic function
   if for every $\delta>0$
    there exists a positive
   number $l$ (depending on $\delta$)
   such that every interval $I$ of length $l$ contains a number $s$ for which
   $ \| g(t+s) - g(t) \| < \delta \quad \mbox{for all} \  t \in \R.$
   Every almost periodic function  $g$ is bounded, i.e., $g \in C_b(\R, X)$,
   where   $ C_b(\R, X)$   is the Banach space of all continuous and bounded functions
   from $\R$ to $X$ with norm $\|g\|_{C_b(\R, X)}$
   $=\sup\limits_{t \in \R} \| g(t) \|$.
   A   continuous function
   $g(t,x): \R \times X^\alpha \to X$
   is said to be almost periodic in $t$ uniformly in $x$ if for every $\delta>0$
   and every compact subset  $K$ of $X^\alpha$ there exists a positive
   number $l$ (depending on $\delta$ and $K$)
   such that every interval $I$ of length $l$ contains a number $s$ for which
   $$ \| g(t+s, x) - g(t,x) \| < \delta \quad \mbox{for all} \  t \in \R
   \quad \mbox{and} \   x \in K.$$
   Given $\epsilon \in [0, 1)$, let $g_\epsilon (t,x): \R \times X^\alpha \to X$
   be almost periodic in $t$ uniformly in $x$. We further assume that $g_\epsilon$
   is continuously differentiable in $x \in X^\alpha$  and satisfies, for every $r>0$,

   (H4) \  $\lim\limits_{\epsilon \to 0} \sup\limits_{t \in \R} \sup\limits_{\|x\|_\alpha \le r}
   \left (
     \| g_\epsilon (t,x) \| + \| {\frac {\partial g_\epsilon}{\partial x}}(t,x) \|_{L(X^\alpha, X)}
   \right )
   =0.$

   Consider the equation with almost periodic perturbations
   \be
\label{nautoeq}
{\frac {dx}{dt}} +A_0 x = f(x) + g_\epsilon (t,x), \quad \ x(\tau) = x_0 \in X^\alpha,
\ee
where $t> \tau$ with $\tau \in \R$.
 Suppose,  for each $x_0 \in X^\alpha$,
  the initial-value problem  \eqref{nautoeq} has a unique solution
$x \in C([\tau,\infty); X^\alpha)$ which depends continuously  on initial data
$x_0$ in $X^\alpha$. Let
$\{S_\epsilon(t, \tau): \tau \in \R,  \ t\ge \tau\}$
 be the evolution process   generated by  the non-autonomous equation
\eqref{nautoeq}, that is, for every  $\tau \in \R$, $t\ge \tau$ and $x_0 \in X^\alpha$,
$S_\epsilon (t, \tau)x_0 = x_\epsilon (t, \tau, x_0)$,   the solution of equation \eqref{nautoeq}
 with  $x_\epsilon (\tau, \tau,x_0) =x_0$.
 For convenience, we  also  write  the process as
 $S_\epsilon(\cdot, \cdot)$ occasionally.

 A family of compact subsets of $X^\alpha$,
 $\{{\cal{A}}_\epsilon (t) \}_{t \in \R}$, is called a pullback attractor
 of  $S_\epsilon(\cdot, \cdot)$ if the following conditions are  fulfilled:

 (1) $\{{\cal{A}}_\epsilon (t) \}_{t \in \R}$   is invariant, i.e.,  for all $  \tau \in  \R$
 and $t \ge \tau$, \
 $S_\epsilon (t,\tau)  {\cal{A}}_\epsilon (\tau)   ={\cal{A}}_\epsilon (t).$

 (2) $\{{\cal{A}}_\epsilon (t) \}_{t \in \R}$ attracts all bounded subsets $B$ of  $X^\alpha$, i.e.,
 \  $dist (S_\epsilon (t,\tau)B, \ {\cal{A}}_\epsilon (t)) \to  0 $
 as $\tau \to -\infty$, where the Hausdorff semi-distance in $X^\alpha$ is used.

  Suppose $S_\epsilon (\cdot, \cdot)$ has a pullback attractor
  $\{{\cal{A}}_\epsilon (t) \}_{t \in \R}$.  We will characterize the structure of
  this attractor in the present paper.
  To the end, we further assume that    there are
  $\epsilon_0 >0$ and  a compact subset $K$ of $X^\alpha$ such that

  (H5) \ $\bigcup\limits_{\epsilon \le \epsilon_0} \bigcup\limits_{t\in \R} {\cal{A}}_\epsilon (t)
  \subseteq  K$.

  Under assumptions (H1)-(H5) we will prove that problem \eqref{nautoeq}
  has exactly $n$ almost periodic solutions and the pullback attractor
  is the union of unstable manifolds of all the almost periodic solutions.
  Our main  results are summarized  as follows.

  \begin{thm}\label{thm1}
  Suppose (H1)-(H5) hold and $g_\epsilon (t,x)$ is almost periodic in $t \in \R$ uniformly
  in $x \in X^\alpha$.  Then there  is $\epsilon_0 >0$ such that for every $\epsilon \in (0,\epsilon_0)$
   problem \eqref{nautoeq} has  exactly $n$ almost periodic solutions $\phi^*_{i, \epsilon}$,
   $1\le i \le n$.  Further, each  $\phi^*_{i, \epsilon}$ corresponds to the equilibrium solution
   $x^*_i$  of problem \eqref{autoeq} in the  sense:
   $$ \lim_{\epsilon \to 0} \sup_{t \in \R} \| \phi^*_{i, \epsilon}  (t) - x^*_i \|_\alpha =0,
   \quad \forall \ 1 \le i \le n.
   $$
     \end{thm}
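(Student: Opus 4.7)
The plan is to combine a fixed-point construction near each hyperbolic equilibrium, carried out in the space of almost periodic functions, with a rigidity argument that uses recurrence properties of almost periodic functions to rule out additional solutions. Fix a hyperbolic equilibrium $x^*_i$ and seek a solution of \eqref{nautoeq} in the form $\phi=x^*_i+y$, so $y$ satisfies
\begin{equation*}
\frac{dy}{dt}+Ay=N(y)+g_\epsilon(t,x^*_i+y),\qquad A:=A_0-f'(x^*_i),
\end{equation*}
with $N(y):=f(x^*_i+y)-f(x^*_i)-f'(x^*_i)y=O(\|y\|_\alpha^2)$. Using the exponential dichotomy \eqref{dichp1}--\eqref{dichp3} together with the smoothing bounds \eqref{dichp}--\eqref{dichq}, bounded complete solutions correspond exactly to fixed points of
\begin{equation*}
(T_\epsilon y)(t)=\int_{-\infty}^{t}e^{-A(t-s)}(I-P)F_\epsilon(s,y(s))\,ds-\int_{t}^{\infty}e^{-A(t-s)}PF_\epsilon(s,y(s))\,ds,
\end{equation*}
where $F_\epsilon(s,y):=N(y)+g_\epsilon(s,x^*_i+y)$. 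For $\epsilon$ small, (H4) and the quadratic estimate on $N$ make $T_\epsilon$ a contraction on a small closed ball $\overline{B_\rho}\subset C_b(\R;X^\alpha)$ mapping into itself; one then restricts the contraction to the closed subspace $AP(\R;X^\alpha)\cap\overline{B_\rho}$. If $y$ is almost periodic in $X^\alpha$ its range is relatively compact, so the uniform-in-$x$ almost periodicity of $g_\epsilon$ yields $F_\epsilon(\cdot,y(\cdot))\in AP(\R;X)$, and a standard convolution lemma shows the two integrals defining $T_\epsilon y$ lie in $AP(\R;X^\alpha)$. The unique fixed point $y_\epsilon$ delivers $\phi^*_{i,\epsilon}:=x^*_i+y_\epsilon$ with $\sup_t\|\phi^*_{i,\epsilon}(t)-x^*_i\|_\alpha\to 0$, and the $n$ solutions so produced are distinct because the $x^*_i$ are.

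To show there are no other almost periodic solutions, let $\psi$ be an arbitrary almost periodic solution of \eqref{nautoeq}. Boundedness of $\psi$, together with $\psi(t)=S_\epsilon(t,\tau)\psi(\tau)$ for every $\tau\le t$ and the pullback-attraction property applied to the bounded set $\{\psi(s):s\in\R\}$, forces $\psi(t)\in\mathcal{A}_\epsilon(t)$ for every $t$. The characterization \eqref{attp2} from \cite{car1}, combined with the identification $\xi^*_{i,\epsilon}=\phi^*_{i,\epsilon}$ obtained above (both are bounded complete solutions of \eqref{nautoeq} that remain in a small neighborhood of $x^*_i$, which is exactly what the contraction isolates), places $\psi$ in the unstable manifold $W^u_\epsilon(\phi^*_{j,\epsilon})$ of some $j$, giving $\|\psi(t)-\phi^*_{j,\epsilon}(t)\|_\alpha\to 0$ as $t\to-\infty$. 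Since the difference $\eta:=\psi-\phi^*_{j,\epsilon}$ of two almost periodic functions is almost periodic, and an almost periodic function tending to zero along a half-line is identically zero (translates $\eta(\cdot+s_k)$ with $s_k\to-\infty$ tend uniformly to $0$, and the Bohr hull of $\eta$ is minimal under translation), we conclude $\psi\equiv\phi^*_{j,\epsilon}$.

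The main obstacle is the almost-periodicity step in the fixed-point construction: one must verify both that the Nemytskii map $y\mapsto g_\epsilon(\cdot,x^*_i+y(\cdot))$ preserves $AP$ despite acting in the fractional-power topology, and that the dichotomy convolutions map $AP(\R;X)$ into $AP(\R;X^\alpha)$ notwithstanding the integrable singularity $(t-s)^{-\alpha}$ in \eqref{dichq}, which is handled by $\int_0^\infty e^{-\beta\sigma}\max\{1,\sigma^{-\alpha}\}\,d\sigma<\infty$. Everything else reduces either to the near-equilibrium contraction argument of \cite{car1} or to classical recurrence properties of almost periodic functions.
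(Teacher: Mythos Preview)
Your existence argument via the Lyapunov--Perron integral operator on the closed ball of almost periodic $X^\alpha$-valued functions is exactly the paper's Lemma~3.1, including the two points you flag as obstacles (Nemytskii preservation of almost periodicity via precompactness of the range, and integrability of $e^{-\beta\sigma}\sigma^{-\alpha}$ to handle the convolution). Your remark that the contraction in fact holds on all of $\overline{B_\rho}\subset C_b(\R;X^\alpha)$, and hence forces $\xi^*_{i,\epsilon}=\phi^*_{i,\epsilon}$, is correct and is used implicitly in the paper as well.

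For the uniqueness step the paper takes a slightly different route. You use the \emph{backward} convergence from \cite{car1} (membership of $\psi$ in some $W^u_\epsilon(\phi^*_{j,\epsilon})$) and then the classical fact that an almost periodic function tending to zero along a half-line vanishes identically, applied to $\psi-\phi^*_{j,\epsilon}$. The paper instead invokes the \emph{forward} convergence $\|x^*_\epsilon(t)-\phi^*_{i,\epsilon}(t)\|_\alpha\to 0$ as $t\to+\infty$ from \cite{car1}, and then uses only the almost periodicity of the unknown solution $x^*_\epsilon$: for arbitrary $t$ it picks an $\eta$-almost-period $s_0$ of $x^*_\epsilon$ with $t+s_0\ge T$, so that
\[
\|x^*_\epsilon(t)-x^*_i\|_\alpha\le\|x^*_\epsilon(t)-x^*_\epsilon(t+s_0)\|_\alpha+\|x^*_\epsilon(t+s_0)-\phi^*_{i,\epsilon}(t+s_0)\|_\alpha+\|\phi^*_{i,\epsilon}(t+s_0)-x^*_i\|_\alpha\le 2\eta+\delta_i,
\]
whence $x^*_\epsilon(t)\in\bar B_\alpha(x^*_i,\delta_i)$ for all $t$, and the local uniqueness from Lemma~3.1 finishes. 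Your argument is a bit cleaner since it yields $\psi\equiv\phi^*_{j,\epsilon}$ directly without returning to the small ball; the paper's argument has the minor advantage that it never needs the almost periodicity of $\phi^*_{i,\epsilon}$ in the uniqueness step, only that $\phi^*_{i,\epsilon}$ stays in $\bar B_\alpha(x^*_i,\delta_i)$. Both rest on the same input from \cite{car1}.
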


  As a  consequence of this result,  it follows
  from Theorem 2.11  of  \cite{car1} that the pullback attractor
  of problem \eqref{nautoeq}  can be characterized  by the union of unstable manifolds
  of the almost periodic solutions.  More precisely,   we have:

  \begin{cor}\label{thm2}
  Suppose (H1)-(H5) hold and $g_\epsilon (t,x)$ is almost periodic in $t \in \R$ uniformly
  in $x \in X^\alpha$.  Then there  is $\epsilon_0 >0$ such that for every $\epsilon \in (0,\epsilon_0)$:

  (1) \  The pullback attractor
  $\{{\cal{A}}_\epsilon (\tau) \}_{\tau \in \R}$
  of  problem \eqref{nautoeq}  is given by
  $$
{\cal{A}}_\epsilon (\tau) =\bigcup_{i=1}^n W^u_\epsilon (\phi^*_{i, \epsilon} ) (\tau),
\qquad  \tau  \in \R,
$$
where   $ W^u_\epsilon (\phi^*_{i, \epsilon} ) (\tau)  $
consists of all $x_0 \in X^\alpha$ such that
there is a backwards   solution $x(t, \tau, x_0)$ of problem \eqref{nautoeq}
for which $x(\tau, \tau, x_0) =x_0$ and
$ \| x(t,\tau, x_0)   -   \phi^*_{i,\epsilon} (t)  \|_\alpha
\to 0$ as $t \to -\infty$.
In addition,   for every $\tau \in \R$, the Hausdorff  dimension of
${\cal{A}}_\epsilon (\tau)$ is the same as
that of  $ \cal{A}$.

(2) \  For every $\tau \in \R$ and $x_0 \in X^\alpha$
 there  exists $i \in \{1, \cdots, n\}$ such that
 $ \| S_\epsilon (t, \tau) x_0   -   \phi^*_{i, \epsilon}  (t) \|_\alpha
 \to 0 $  as $t \to \infty$.  In addition, if $S(t, \tau)x_0$
 is a complete bounded solution, then there is $j \neq i$ such that
 $ \| S_\epsilon (t, \tau) x_0   \to  \phi^*_{j, \epsilon} (t) \|_\alpha
 \to 0 $  as $t \to - \infty$.
  \end{cor}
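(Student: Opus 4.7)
The plan is to reduce Corollary~\ref{thm2} to the structural result of Carvalho et al.\ \cite{car1} for general non-autonomous perturbations. Theorem~\ref{thm1} will supply the family $\phi^*_{i,\epsilon}$ of almost periodic solutions, and the remaining work is to identify them with the complete bounded solutions $\xi^*_{i,\epsilon}$ of \cite{car1} so that \eqref{attp2} reads as the desired formula for the pullback attractor.

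First I would apply Theorem~\ref{thm1} to obtain, for $\epsilon \in (0,\epsilon_0)$, the $n$ almost periodic solutions $\phi^*_{i,\epsilon}$ satisfying $\sup_{t\in\R}\|\phi^*_{i,\epsilon}(t) - x^*_i\|_\alpha \to 0$ as $\epsilon\to 0$. Each $\phi^*_{i,\epsilon}$ is in particular a complete bounded solution of \eqref{nautoeq}, and for $\epsilon$ small enough it lives in a prescribed small neighborhood of the hyperbolic equilibrium $x^*_i$. Using the exponential dichotomy \eqref{dichp1}--\eqref{dichq} of the linearization $A = A_0 - f'(x^*_i)$ together with the estimates \eqref{dichp}--\eqref{dichq} and the smallness provided by (H4), a standard variation of constants / contraction argument (as in \cite{car1}) shows that for sufficiently small $\epsilon$ there is exactly one complete bounded solution of \eqref{nautoeq} in that neighborhood. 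Since $\xi^*_{i,\epsilon}$ of \cite{car1} is also constructed inside the same neighborhood, this forces $\phi^*_{i,\epsilon} \equiv \xi^*_{i,\epsilon}$ for each $i$.

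Once this identification is made, part~(1) is a direct transcription of \eqref{attp2} from Theorem~2.11 of \cite{car1}: the pullback unstable manifold $W^u_\epsilon(\xi^*_{i,\epsilon})(\tau)$ used there coincides with $W^u_\epsilon(\phi^*_{i,\epsilon})(\tau)$ as defined in the statement (backwards trajectories asymptotic in $X^\alpha$ to the $i$-th almost periodic solution). The Hausdorff-dimension equality $\dim_H {\cal A}_\epsilon(\tau) = \dim_H {\cal A}$ is available from the finite-dimensional nature of the local unstable manifolds together with their continuity in $\epsilon$, which is the second assertion of Theorem~2.11 in \cite{car1}, and I would simply quote it. Part~(2) is the forward/backward convergence statement of \cite{car1}: the gradient structure (H3) is inherited, in the appropriate perturbed form, by the process $S_\epsilon(\cdot,\cdot)$, and combined with the hyperbolicity of each $\phi^*_{i,\epsilon}$ it yields that every orbit $S_\epsilon(t,\tau)x_0$ converges to some $\phi^*_{i,\epsilon}$ in $X^\alpha$ as $t\to\infty$; the backward convergence for complete bounded orbits, together with $j\ne i$, follows from the fact that such an orbit lies in the pullback attractor, hence in $W^u_\epsilon(\phi^*_{j,\epsilon})$ for some $j$, and cannot simultaneously be forward- and backward-asymptotic to the same hyperbolic almost periodic solution.

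The only nontrivial step, and hence the main obstacle, is the uniqueness-of-complete-solutions-near-a-hyperbolic-equilibrium identification $\phi^*_{i,\epsilon} \equiv \xi^*_{i,\epsilon}$; everything else is a citation of \cite{car1} once that identification is in place. I expect this to be routine given the dichotomy estimates \eqref{dichp}--\eqref{dichq}, but care is needed because (H4) controls $g_\epsilon$ and $\partial_x g_\epsilon$ only in $L(X^\alpha,X)$ norm, so the contraction must be set up in an appropriate weighted $C_b(\R;X^\alpha)$ space compatible with these estimates.
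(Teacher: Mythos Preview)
Your proposal is correct and follows the paper's approach exactly: the paper derives Corollary~\ref{thm2} as an immediate consequence of Theorem~\ref{thm1} together with Theorem~2.11 of \cite{car1}, with no further argument. The identification $\phi^*_{i,\epsilon}\equiv\xi^*_{i,\epsilon}$ that you flag as the main obstacle is in fact already implicit in the proof of Lemma~\ref{lem31}, since the contraction estimates \eqref{p31_40}--\eqref{p31_41} hold for arbitrary $y_1,y_2\in C_b(\R,X^\alpha)$ with $\|y_j\|_{C_b}\le\delta$ (not just almost periodic ones), so the fixed point is the unique complete bounded solution in the $\delta$-ball around $x^*_i$.
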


\section{Proof   of  Main Results}
\setcounter{equation}{0}

This section is devoted to  the proof of our main results.
We first prove the existence of almost periodic solutions of
problem \eqref{nautoeq}.

\begin{lem}
\label{lem31}
Suppose (H4) holds and $g_\epsilon (t,x)$ is almost periodic in $t \in \R$ uniformly
  in $x \in X^\alpha$.
  Then for every hyperbolic  equilibrium solution $x_0^*$ of problem \eqref{autoeq},
  there  are positive numbers  $\delta_0 $  and
   $\epsilon_0$ such that  for each $\epsilon \in (0, \epsilon_0)$,  problem
  \eqref{nautoeq} has a unique  almost periodic solution
  $x_\epsilon^*: \R \to  X^\alpha$  which satisfies
  $\| x_\epsilon^*  -x_0^*\|_{C_b(\R, X^\alpha)}$
   $\le \delta_0$. Furthermore,
  $x_\epsilon^* \to x^*_0$ in $C_b(\R, X^\alpha)$ as
  $\epsilon \to 0$.
  \end{lem}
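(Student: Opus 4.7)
The plan is to linearize the problem around the hyperbolic equilibrium $x_0^*$ and recast the existence of an almost periodic solution near $x_0^*$ as a fixed point problem on $C_b(\R, X^\alpha)$. Writing $x = x_0^* + y$, $A = A_0 - f'(x_0^*)$, and $N(y) = f(x_0^* + y) - f(x_0^*) - f'(x_0^*)y$, and using $A_0 x_0^* = f(x_0^*)$, equation \eqref{nautoeq} is equivalent to
$$ \frac{dy}{dt} + Ay = N(y) + g_\epsilon(t, x_0^* + y). $$
Since $A$ has the exponential dichotomy with projection $P$ satisfying \eqref{dichp1}--\eqref{dichq}, any $y \in C_b(\R, X^\alpha)$ is a mild solution iff
$$ y(t) = -\int_t^\infty e^{-A(t-s)} P h_\epsilon(s, y)\, ds + \int_{-\infty}^t e^{-A(t-s)}(I-P) h_\epsilon(s, y)\, ds =: (\mathcal{T}_\epsilon y)(t), $$
where $h_\epsilon(s, y) = N(y(s)) + g_\epsilon(s, x_0^* + y(s))$. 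The estimates \eqref{dichp}, \eqref{dichq} guarantee the two integrals are absolutely convergent in $X^\alpha$ (the $\max\{1,(t-s)^{-\alpha}\}$ factor is integrable against $e^{-\beta(t-s)}$ since $\alpha < 1$), and give a bound $\|\mathcal{T}_\epsilon y\|_{C_b(\R, X^\alpha)} \le C_0 \|h_\epsilon(\cdot, y)\|_{C_b(\R, X)}$ with $C_0$ depending only on $M_1$, $\beta$, $\alpha$.

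Next I would show $\mathcal{T}_\epsilon$ contracts a small closed ball $\overline{B_{\delta_0}}\subset C_b(\R, X^\alpha)$ about the origin, uniformly in small $\epsilon$. Since $f \in C^1$, $N(0)=0$ and $N'(0)=0$, so for any $\eta>0$ one can pick $\delta_0$ with
$$ \|N(y_1) - N(y_2)\|_X \le \eta\, \|y_1 - y_2\|_\alpha, \qquad \|N(y)\|_X \le \eta\, \|y\|_\alpha, $$
for $\|y_i\|_\alpha \le \delta_0$. By hypothesis (H4), with $r = \|x_0^*\|_\alpha + \delta_0$, both $\|g_\epsilon(t, x)\|_X$ and the Lipschitz constant of $g_\epsilon(t,\cdot)$ on the ball of radius $r$ tend to $0$ as $\epsilon \to 0$, uniformly in $t$. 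Choosing $\eta$ so that $2C_0 \eta < 1/2$ and then $\epsilon_0$ small, a standard computation gives $\mathcal{T}_\epsilon: \overline{B_{\delta_0}} \to \overline{B_{\delta_0}}$ with Lipschitz constant $\kappa \le 1/2$. The Banach fixed point theorem yields a unique $y_\epsilon^* \in \overline{B_{\delta_0}}$; setting $x_\epsilon^* = x_0^* + y_\epsilon^*$ produces the desired solution. The convergence follows from
$$ \|y_\epsilon^*\|_{C_b(\R, X^\alpha)} \le \frac{\|\mathcal{T}_\epsilon 0\|_{C_b(\R, X^\alpha)}}{1-\kappa} \le \frac{C_0}{1-\kappa}\, \sup_{t \in \R, \|x\|_\alpha \le r}\|g_\epsilon(t, x)\|_X \xrightarrow{\epsilon \to 0} 0. $$

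The main obstacle is verifying that the fixed point is almost periodic, not merely bounded. The cleanest route is to carry out the contraction on the closed subspace $AP(\R, X^\alpha) \subset C_b(\R, X^\alpha)$ of almost periodic functions. Two facts are needed: (a) if $y \in AP(\R, X^\alpha)$ then $y(\R)$ is relatively compact in $X^\alpha$, so the composition $s \mapsto g_\epsilon(s, x_0^* + y(s))$ is almost periodic with values in $X$ (this is a standard property of functions that are almost periodic in $t$ uniformly in $x$, combined with uniform continuity of $y$); since $N$ is continuous on $X^\alpha$, $s \mapsto N(y(s))$ is also almost periodic, so $h_\epsilon(\cdot, y) \in AP(\R, X)$; (b) if $h \in AP(\R, X)$, then both integrals in $\mathcal{T}_\epsilon$ are almost periodic in $X^\alpha$, which one proves by fixing $\delta>0$, choosing the common $\delta$-almost period $s$ of $h$, and using the dichotomy bounds \eqref{dichp}, \eqref{dichq} to estimate $(\mathcal{T}_\epsilon y)(t+s) - (\mathcal{T}_\epsilon y)(t)$ by an explicit integral that is small uniformly in $t$. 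Together (a) and (b) give $\mathcal{T}_\epsilon(AP \cap \overline{B_{\delta_0}}) \subset AP \cap \overline{B_{\delta_0}}$, so the fixed point lies in this set and is almost periodic. Uniqueness in the whole ball $\overline{B_{\delta_0}} \subset C_b(\R, X^\alpha)$ is automatic from the contraction, which also yields the uniqueness statement of the lemma.
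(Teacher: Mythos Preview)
Your proposal is correct and follows essentially the same approach as the paper: linearize about $x_0^*$, use the exponential dichotomy of $A=A_0-f'(x_0^*)$ to recast bounded solutions as fixed points of the integral operator, and run a Banach contraction on a small ball in the space of almost periodic $X^\alpha$-valued functions, verifying that the operator preserves almost periodicity via relative compactness of the range and the uniform almost periodicity of $g_\epsilon$. The only cosmetic difference is that the paper obtains the convergence $x_\epsilon^*\to x_0^*$ by rerunning the argument for each $\delta\in(0,\delta_0]$, whereas you use the cleaner a priori bound $\|y_\epsilon^*\|\le \|\mathcal{T}_\epsilon 0\|/(1-\kappa)$.
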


  \begin{proof}
  Suppose that  $x: \R \to X^\alpha$  is
   an almost periodic  solution of equation \eqref{nautoeq}. Then for
  $y =x-x_0^*$, $\tau \in \R$ and $t \ge \tau$  we have
  \be
  \label{p31_1}
  {\frac {dy}{dt}} + Ay = h(y) +g_\epsilon (t, x_0^* +y),
  \ee
  where $A= A_0 -f^\prime (x_0^*)$ and
  $h(y) =  f(y+ x_0^*) - f(x_0^*)- f^\prime (x_0^*) y$.
  Note that $h(0) = 0$ and $h^\prime (0) =0$.
  It follows from \eqref{p31_1} that, for all $t \ge \tau$ with $\tau \in \R$,
  \be
  \label{p31_2}
  y(t) = e^{-A(t-\tau)} y(\tau)
  + \int_\tau^t e^{-A(t-s)} \left (
  h(y(s))  + g_\epsilon (s, x_0^* + y(s))
  \right )ds.
  \ee
  Since $x_0^*$ is a  hyperbolic equilibrium solution of
  problem \eqref{autoeq}, there is a projection $P$ satisfying
  \eqref{dichp1}-\eqref{dichq}. Applying $P$
  and $I-P$ to \eqref{p31_2} we find that
   \be
  \label{p31_3}
  Py(t) = e^{-A(t-\tau)}  Py(\tau)
  + \int_\tau^t e^{-A(t-s)} P \left (
  h(y(s))  + g_\epsilon (s, x_0^* + y(s))
  \right )ds,
  \ee
  and
   \be
  \label{p31_4}
  (I-P)y(t) = e^{-A(t-\tau)}  (I-P)y(\tau)
  + \int_\tau^t e^{-A(t-s)} (I-P) \left (
  h(y(s))  + g_\epsilon (s, x_0^* + y(s))
  \right )ds.
  \ee
  Note that $y: \R \to X^\alpha$ is bounded since it is
  almost periodic. Letting $\tau \to +\infty$
  and $\tau \to - \infty$ in \eqref{p31_3} and
  \eqref{p31_4}, respectively,
  by \eqref{dichp2} and \eqref{dichp3} we obtain
  $$
  Py(t) =    -  \int_t^\infty e^{-A(t-s)} P \left (
  h(y(s))  + g_\epsilon (s, x_0^* + y(s))
  \right )ds,
$$
  and
$$
  (I-P)y(t) =      \int_{-\infty}^t e^{-A(t-s)} (I-P) \left (
  h(y(s))  + g_\epsilon (s, x_0^* + y(s))
  \right )ds.
 $$
 Therefore,  $y$ must satisfy the equation, for all $t \in \R$,
 \be
 \label{p31_6}
 y(t)=
 \int_{-\infty}^t e^{-A(t-s)} (I-P) \phi_\epsilon (s, y(s)) ds
 -
 \int_t^\infty e^{-A(t-s)} P \phi_\epsilon (s, y(s)) ds,
 \ee
 where $\phi_\epsilon (t,x) = h(x)  + g_\epsilon (t, x_0^* +x)$
 for $t \in \R$ and $x \in X^\alpha$.
 Conversely, if $y: \R \to X^\alpha$ is  almost periodic and fulfills
 \eqref{p31_6}, then $y$ is  an almost periodic solution of equation
 \eqref{nautoeq}. So finding an almost periodic solution
 of equation \eqref{nautoeq} amounts
 to finding a fixed point of the mapping $\cal{F}$ given by
 \be
 \label{mapF}
  {\cal{F}} (y)  (t)
  = \int_{-\infty}^t e^{-A(t-s)} (I-P) \phi_\epsilon (s, y(s)) ds
 -
 \int_t^\infty e^{-A(t-s)} P \phi_\epsilon (s, y(s)) ds.
 \ee
 Given $\delta>0$, set
 $$Z=\{y: \R \to X^\alpha, \ y \mbox{ is almost periodic  and } \
 \sup_{t\in \R} \| y(t) \|_\alpha \le \delta \}.
 $$
 Then $Z$ is a complete metric space with distance induced by
 the  norm  of $C_b(\R, X^\alpha)$.
 In what follows, we will prove,
 for a  sufficiently small $\delta$, $\cal{F}$ has a unique  fixed point
 in $Z$.

 Note  that $h$ is continuously   differentiable and $h^\prime (0) =0$.
 So there is $\delta_1>0$ such that for all $x \in X^\alpha$ with
 $\| x \|_\alpha < \delta_1$,
 \be
 \label{p31h1}
 \| h^\prime (x) \|_{L(X^\alpha, X)}
 < \min \left  \{
    {\frac \beta{8M_1}}, \ \ {\frac {\beta^{1-\alpha}}{8M_1 \Gamma(1-\alpha)}},
    \ \ {\frac 1{2M_1 (4\beta^{-1} + 2 \beta^{\alpha -1} \Gamma (1-\alpha) )} }
\right  \},
 \ee
 where $\beta$ and $M_1$ are the positive constants
 in \eqref{dichp} and \eqref{dichq}, and $\Gamma (\alpha)$ is the
 value of the $\Gamma$ function  at $\alpha$.
 Let $\delta_0   =  \min \{1, \  \delta_1\}$.
 Then given $\delta \in (0, \delta_0]$,  by (H4) we find that there is
 $\epsilon_0>0 $ depending on $\delta$ such that for all $\epsilon < \epsilon_0$,
 \be
 \label{p31g1}
 \sup_{t \in \R} \ \
 \sup_{ \| x \|_\alpha \le  1+ \|x_0^* \|_\alpha}
 \| g_\epsilon (t,x) \|
 <  \min  \left  \{
    {\frac {\beta\delta}{8M_1}}, \ \ {\frac {\beta^{1-\alpha}\delta}{8M_1 \Gamma(1-\alpha)}}
    \right  \},
    \ee
    and
     \be
 \label{p31g2}
 \sup_{t \in \R} \ \
 \sup_{ \| x \|_\alpha \le  1+ \|x_0^* \|_\alpha}
 \| {\frac {\partial g_\epsilon}{\partial x}} (t,x) \|_{L(X^\alpha, X)}
 <
  {\frac 1{2M_1 (4\beta^{-1} + 2 \beta^{\alpha -1} \Gamma (1-\alpha) )} }.
 \ee
 Given $y \in Z$ it follows from
 \eqref{dichp}-\eqref{dichq} and
 \eqref{mapF} that
 \be
 \label{p31_10}
 \| {\cal{F}} (y) (t) \|_\alpha
 \le M_1 \int_t ^\infty e^{\beta (t-s)} \| \phi_\epsilon (s, y(s))\| ds
 + M_1 \int_{-\infty}^t e^{-\beta (t-s) } (1+ (t-s)^{-\alpha})
 \| \phi_\epsilon (s, y(s)) \|ds.
 \ee
 The right-hand side of \eqref{p31_10} is estimated as follows.
 Since $y \in Z$   and $\delta<\delta_1$, by \eqref{p31h1} we get,
 for all $s \in \R$,
 $$
 \| h(y(s)) \|
 =\| h(y(s)) -h(0) \| \le \sup_{\| x\|_\alpha \le \delta} \| h^\prime (x ) \|_{L(X^\alpha, X)} \sup_{s \in \R} \| y(s) \|_\alpha
 \le \min
 \left  \{
    {\frac {\beta\delta}{8M_1}}, \ \ {\frac {\beta^{1-\alpha}\delta}{8M_1 \Gamma(1-\alpha)}}
    \right  \},
     $$
     which along with
     \eqref{p31g1} shows that, for all  $\epsilon < \epsilon_0$,
     \be
     \label{p31_11}
    \sup_{s \in \R}  \| \phi_\epsilon (s, y(s)) \|
     \le  \min \left  \{
    {\frac {\beta\delta}{4M_1}}, \ \ {\frac {\beta^{1-\alpha}\delta}{4M_1 \Gamma(1-\alpha)}}
    \right  \} .
        \ee
        By \eqref{p31_10} and \eqref{p31_11} we find that, for all $t \in \R$,
  $$
 \| {\cal{F}} (y) (t) \|_\alpha
 \le {\frac {2M_1}\beta} \sup_{s\in \R} \|\phi_\epsilon (s, y(s)) \|
 + M_1\sup_{s\in \R} \|\phi_\epsilon (s, y(s)) \|
    \int_{-\infty}^t e^{-\beta (t-s) }  (t-s)^{-\alpha}ds
    $$
    \be
    \label{p31_12}
       \le  {\frac 12} \delta + {\frac {\beta^{1-\alpha} }{4\Gamma(1-\alpha)} }\delta
    \int_0^\infty e^{-\beta s} s^{-\alpha} ds
    \le \delta,
    \ee
    where  we have used the integral
    $  \int_0^\infty e^{-\beta s} s^{-\alpha} ds
    = \beta^{\alpha -1} \Gamma (1-\alpha) $   for  $ 0\le \alpha <1.$
    Note that \eqref{p31_12}  implies  ${\cal{F}} (y)
    \in C_b(\R, X^\alpha)$  with norm $\|{\cal{F}}(y) \|_{C_b(\R, X^\alpha)}
    \le \delta$.   We now prove  ${\cal{F}}(y)$ is almost periodic.
   If  $y \in Z$, then $y: \R \to  X^\alpha$ is almost periodic  and hence
   the set $\{ y(t): t \in \R \}$ is precompact in $X^\alpha$.
   As a consequence  of this,
   it follows from   \cite{yos1}
     (Theorem 2.7, page 16)
     that  the functions  $g_\epsilon (\cdot,  x_0^* + y(\cdot))$
     and $h(y(\cdot))$ are
      almost periodic functions with values in $X$.
      Therefore,
       $\phi_\epsilon (t, y(t)) =
     h(y(t)) + g_\epsilon (t, y(t))$ is  also  almost periodic in $X$.
     By definition,
     given $\eta>0$,  there is a positive number $l$ (depending on $\eta$) such that
     every interval $I$ of length $l$   contains  a number $\sigma$ for which
      \be
     \label{p31_30}
     \| \phi_\epsilon (t+\sigma,  y(t+\sigma)) - \phi_\epsilon (t, y(t)) \|
     < \eta, \quad    \forall \  t \in \R.
     \ee
     Using \eqref{dichp}-\eqref{dichq} and \eqref{p31_30}, from \eqref{mapF} we obtain,
     for all $t \in \R$,
     $$
     \| {\cal{F}} (y) (t+\sigma) -{\cal{F}} (y) (t) \|_\alpha
     $$
     $$
     \le \|  \int_{-\infty}^{t+\sigma} e^{-A(t+\sigma -s)} (I-P) \phi_\epsilon (s, y(s)) ds
     -
     \int_{-\infty}^{t} e^{-A(t -s)} (I-P) \phi_\epsilon (s, y(s)) ds\|_\alpha
     $$
     $$
     +   \|  \int^{\infty}_{t+\sigma} e^{-A(t+\sigma -s)} P \phi_\epsilon (s, y(s)) ds
     -
     \int^{\infty}_{t} e^{-A(t -s)} P  \phi_\epsilon (s, y(s)) ds\|_\alpha
     $$
     $$
     \le  \|  \int_{-\infty}^{t} e^{-A(t -s)} (I-P) \left (
     \phi_\epsilon (s+\sigma,  y(s+\sigma))
     -  \phi_\epsilon (s,  y(s)) \right ) ds  \|_\alpha
     $$
     $$
     +   \|  \int^{\infty}_{t} e^{-A(t -s)} P
     \left (
     \phi_\epsilon (s+\sigma,  y(s+\sigma))
     -  \phi_\epsilon (s,  y(s)) \right ) ds  \|_\alpha
      $$
            $$
     \le M_1    \int_{-\infty}^{t} e^{-\beta(t -s)}
     \left ( 1 + (t-s)^{-\alpha}  \right )
     \|
     \phi_\epsilon (s+\sigma,  y(s+\sigma))
     -  \phi_\epsilon (s,  y(s)) \|  ds
     $$
     $$
     +  M_1  \int^{\infty}_{t} e^{\beta(t -s)}
    \|
     \phi_\epsilon (s+\sigma,  y(s+\sigma))
     -  \phi_\epsilon (s,  y(s))  \|  ds
      $$
  $$
     \le \eta M_1    \int_{-\infty}^{t} e^{-\beta(t -s)}
     \left ( 1 + (t-s)^{-\alpha}  \right ) ds
     + \eta  M_1  \int^{\infty}_{t} e^{\beta(t -s)} ds
     $$
     $$
     \le \eta M_1 \left (
     2\beta^{-1} + \beta^{\alpha -1} \Gamma (1-\alpha)
     \right ),
       $$
       which shows that ${\cal{F}} (y) : \R \to  X^\alpha$
       is almost periodic.
       By \eqref{p31_12} we see that
        $\cal{F}$ given by \eqref{mapF} maps $Z$ into itself.
        We next show that ${\cal{F}}: Z \to Z$ is a contraction.

        Let $y_1, y_2 \in Z$.   By  \eqref{mapF}  and \eqref{dichp}-\eqref{dichq}
        we have
        $$
        \| {\cal{F}} (y_1) (t) - {\cal{F}} (y_2) (t) \|_\alpha
        $$
        $$
        \le \| \int_{-\infty}^t e^{-A(t-s)} (I-P) (\phi_\epsilon (s, y_1(s))
        -\phi_\epsilon (s, y_2(s)) ) ds
        \|_\alpha
        $$
$$ +  \| \int^{\infty}_t e^{-A(t-s)} P  (\phi_\epsilon (s, y_1(s))
        -\phi_\epsilon (s, y_2(s)) ) ds
        \|_\alpha
        $$
        $$
        \le
   M_1  \int_{-\infty}^t e^{-\beta(t-s)}  (1 + (t-s)^{-\alpha} )
    \|  \phi_\epsilon (s, y_1(s))
        -\phi_\epsilon (s, y_2(s))  \|  ds
        $$
       \be
       \label{p31_40}
        + M_1  \int^{\infty}_t e^{\beta(t-s)}
    \|  \phi_\epsilon (s, y_1(s))
        -\phi_\epsilon (s, y_2(s))  \|  ds.
      \ee
      It follows from  \eqref{p31h1} and \eqref{p31g2} that
      $$
       \|  \phi_\epsilon (s, y_1(s))
        -\phi_\epsilon (s, y_2(s))  \|
         $$
         $$
         \le \| h(y_1(s)) -h(y_2(s)) \|
         + \| g_\epsilon (s, x_0^* + y_1(s)) - g_\epsilon (s, x_0^* + y_2(s)) \|
         $$
         $$
         \le \sup_{\| x\|_\alpha \le \delta} \| h^\prime (x) \|_{L(X^\alpha, X)} \| y_1(s) -y_2(s) \|_\alpha
         $$
         $$
         + \sup_{s \in \R}  \ \ \sup_{\|x \|_\alpha \le \|x_0^*\|_\alpha + 1}
         \| {\frac {\partial g_\epsilon}{\partial x}} (s,x) \|_{L(X^\alpha, X)} \| y_1(s)- y_2(s) \|_\alpha
         $$
         \be\label{p31_41}
         \le {\frac 1{M_1 (4\beta^{-1} + 2 \beta^{\alpha -1} \Gamma(1-\alpha) )} }\| y_1(s)- y_2(s) \|_\alpha.
         \ee
         By \eqref{p31_40} and \eqref{p31_41} we obtain, for all $t \in \R$,
         $$
        \| {\cal{F}} (y_1) (t) - {\cal{F}} (y_2) (t) \|_\alpha
        $$
        $$
       \le  {\frac 1{ (4\beta^{-1} + 2 \beta^{\alpha -1} \Gamma(1-\alpha) )} } \sup_{s \in \R} \| y_1(s)- y_2(s) \|_\alpha
       \left ( 2\int_{0}^\infty e^{-\beta s } ds   + \int_0^\infty e^{-\beta s} s^{-\alpha} ds
       \right )
       $$
        $$
        \le  {\frac 1{ (4\beta^{-1} + 2 \beta^{\alpha -1} \Gamma(1-\alpha) )} }
         (2\beta^{-1} +  \beta^{\alpha -1} \Gamma(1-\alpha ))
          \sup_{s \in \R} \| y_1(s)- y_2(s) \|_\alpha,
                 $$
                 which shows that
                 $$
        \sup_{t \in \R} \| ( {\cal{F}} (y_1)  - {\cal{F}} (y_2) )  (t) \|_\alpha
        \le {\frac 12}   \sup_{t \in \R} \| (y_1 - y_2 ) (t) \|_\alpha,        $$
        and hence ${\cal{F}}: Z \to  Z$ is a contraction.
        By the fixed  point  theorem, $\cal{F}$ has a unique fixed point $y_\epsilon^*$ in $Z$, which implies that
        $x_\epsilon^* = x_0^* + y^*_\epsilon$
        is the unique almost periodic solution of equation \eqref{nautoeq}
        satisfying
        $\sup\limits_{t\in \R} \| x_\epsilon ^* (t) -x_0^* \|_\alpha \le \delta$.
        Taking  $\delta =\delta_0$ we get the first part of  Lemma \ref{lem31}.
        For  arbitrary $\delta \in (0, \delta_0]$,
         the above process shows that
        $$
        \lim_{\epsilon \to 0} \sup_{t \in \R} \|  x_\epsilon^* (t) -x_0^* \|_\alpha =0,$$
        which completes the proof.
          \end{proof}

          We are now in a position to  prove our main results.

          {\bf Proof of Theorem 2.1}. \
          Given $\delta>0$ and $x_0 \in X^\alpha$,  denote by $\bar{B}_\alpha(x_0,\delta)$ the
          closed  ball in $X^\alpha$  with center $x_0$ and radius $\delta$, that is,
          $$ {\bar{B}}_\alpha (x_0, \delta) = \{  x \in X^\alpha: \  \|x-x_0 \|_\alpha \le \delta \}.
          $$
          Since equation \eqref{autoeq} has only  a set of finitely many equilibrium solutions
          $x^*_i$ ($1\le i \le n$), there is  $\delta_0>0$ such that
          ${\bar{B}}_\alpha (x_i^*, \delta_0) \bigcap  {\bar{B}}_\alpha (x_j^*, \delta_0)
          =\emptyset$ for all
            $i, j \in \{1, \cdots, n\}$ with $i\neq j$.
            Furthermore, for  each $i \in \{1, \cdots, n\}$, by Lemma \ref{lem31} there exist
            $\delta_i \in (0, \delta_0)$ and $\epsilon_i>0$ such that for every $\epsilon \in (0, \epsilon_i)$,
            equation \eqref{nautoeq} has a unique almost periodic solution
            $\phi_{i,\epsilon}^*$   for which
             $\phi_{i,\epsilon}^* (t)              \in {\bar{B}}_\alpha (x_i^*, \delta_i)$
             for all $t \in \R$.
            Let  $\epsilon_0 =\min \{\epsilon_i:  1 \le i \le n \}$.
            Then, for every $\epsilon \in (0, \epsilon_0)$,  we want to show that
            $\phi_{i, \epsilon}^*$,  $ 1\le i \le n$,   are the only almost periodic solutions
            of equation \eqref{nautoeq}.
            Suppose $x^*_\epsilon: \R \to X^\alpha$  is an arbitrary almost periodic solution
            of equation \eqref{nautoeq}. Since an almost periodic solution is a complete bounded solution,
            it follows from \cite{car1} that there is $i\in \{1, \cdots, n\}$ such that
            $$
            \lim_{t \to \infty} \| x^*_\epsilon (t) - \phi^*_{i,\epsilon}   (t)  \|_\alpha =0.
            $$
            Therefore, given $\eta>0$, there is $T>0$ such that for all $t \ge T$, the following holds:
            \be\label{pt1_1}
             \| x^*_\epsilon (t) - \phi^*_{i,\epsilon}   (t)  \|_\alpha          \le  \eta.
             \ee
             On the other hand,   since  $x^*_\epsilon$ is almost periodic, for the given $\eta>0$,
             there is   a positive number $l$ (depending on $\eta$) such that every interval $I$
             of length $l$ contains a number $s$ for which
             $$
             \|x^*_\epsilon (\tau  +s) - x^*_\epsilon (\tau) \|_\alpha \le \eta,
             \qquad \forall \   \tau  \in \R.
             $$
             This implies that for every $t \in \R$, there   is  a number
             $s_0 \in [T-t, \ T-t +l]$ such that
             \be
             \label{pt1_2}
                 \|x^*_\epsilon (t +s_0) - x^*_\epsilon (t) \|_\alpha \le \eta.
                 \ee
                 By  $s_0 \in   [T-t, \ T-t +l]$ we have
                 $t+s_0 \in [T, T+l]$ and hence   it follows from
                 \eqref{pt1_1}   that
                 \be\label{pt1_3}
             \| x^*_\epsilon (t+s_0) - \phi^*_{i,\epsilon}
              (t+s_0 )  \|_\alpha          \le  \eta.
             \ee
             By  \eqref{pt1_2} and \eqref{pt1_3} we obtain
             \be
             \label{pt1_5}
             \| x^*_\epsilon (t) - \phi^*_{i,\epsilon}
              (t+s_0 )  \|_\alpha          \le    2\eta.
             \ee
             Since $\phi^*_{i,\epsilon} (t)  \in {\bar{B}}_\alpha (x_i^*, \delta_i )$
             for all $t \in \R$, we find from \eqref{pt1_5}
              that, for all $t \in \R$,
             $$
              \| x^*_\epsilon (t) -  x^*_i \|_\alpha
                     \le   \delta_i +   2\eta,
                     \quad \forall \ \eta>0.
                     $$
                     Taking $\eta \to   0 $, we
                     see    that
                     $x^*_\epsilon $ is an almost periodic solution of equation
                     \eqref{nautoeq} which  satisfies
                     $x^*_\epsilon (t) \in {\bar{B}}_\alpha(x_i^*, \delta_i)$
                     for all $t \in \R$.
                     Note that  $\phi^*_{i, \epsilon}$ is the unique almost periodic solution
                     of the equation which  belongs to
        $ {\bar{B}}_\alpha(x_i^*, \delta_i)$. Therefore we must have
        $
        x^*_\epsilon (t) = \phi^*_{i,\epsilon} (t) $ for all $  \ t \in \R$.
        Since  $  x^*_\epsilon  $ is an arbitrary almost periodic solution,
        we see that
         the non-autonomous equation
        has no any other almost  periodic  solutions except
                  $\phi^*_{i, \epsilon}$, $1\le i \le n$.
                          This  concludes the proof of Theorem 2.1.

        As mentioned  earlier,  Corollary 2.2 was proved by Carvalho et. al. in \cite{car1}
        when $\phi^*_{i, \epsilon}$,  $1\le i \le n$, are complete bounded solutions. In the present
        paper, we  demonstrate  that these solutions are actually almost periodic solutions
        under almost periodic solutions. Therefore, Corollary 2.2 is an immediate consequence
        of Theorem 2.1 and the results of \cite{car1}.

\section{Applications}
\setcounter{equation}{0}

In this section, we discuss an application of our results to the
Chafee-Infante  equation and describe the almost periodic dynamics of
the non-autonomous equation under small perturbations.
The one-dimensional   Chafee-Infante  equation reads
\be
\label{cfeq1}
{\frac {\partial u}{\partial t}} -{\frac {\partial ^2 u}{\partial x^2}}
=\lambda (u - u^3), \quad x \in (0, \pi), \quad t>0,
\ee
with the boundary condition
\be
\label{cfeq2}
u(t, 0) = u(t, \pi) =0, \quad  t>0,\ee
and the initial condition
\be
\label{cfeq3}
u(0, x) =u_0(x), \quad x \in (0, \pi),\ee
where $\lambda$ is a positive parameter.

Let $A_0  = -\partial_{xx}$ with domain
$D(A_0) =H^2((0,\pi))\bigcap H^1_0((0,\pi))$.
Then  $A_0$   is a  sectorial operator
in $X=L^2((0,\pi))$
and the eigenvalues of $A_0$ are given by
$ \lambda_n = n^2$ where $n$ is any positive  integer.
It is well known that problem \eqref{cfeq1}-\eqref{cfeq3}
is well-posed in $D(A_0^{\frac 12}) = H^1_0((0,\pi))$.
In other words,
system  \eqref{cfeq1}-\eqref{cfeq3} defines a
  continuous  semigroup
$\{S(t)\}_{t \in \R}$ on $H^1_0((0,\pi))$
(see, e.g., \cite{hen1}).
This semigroup    has a global attractor $\cal{A}$  in
$H^1_0((0,\pi))$.
The structure of equilibrium solutions of problem  \eqref{cfeq1}-\eqref{cfeq3}
is well understood.  Actually,  for every $\lambda \in (n^2, (n+1)^2)$
 where $n$ is any nonnegative
integer,  it was proved by  Chafee and Infante in \cite{cha1} that  problem \eqref{cfeq1}-\eqref{cfeq3}
has exactly $2n+1$ equilibrium solutions.
It was further proved by Henry in \cite{hen2} that all these equilibrium solutions are
hyperbolic.
Let $V:  H^1_0((0,\pi)) \to \R$ be given  by
$$
V(u) = \int_0^\pi  \left (
 ({\frac {\partial u}{\partial x}})^2  - \lambda u^2
+{\frac 12} \lambda u^4
\right ) dx,  \quad
\forall \ u \in H^1_0((0,\pi)).
$$
Then $V$ is a Liapunov function of $\{S(t)\}_{t \in \R}$.
So assumptions   (H1)-(H3) are  all fulfilled in this case,
and the attractor $\cal{A}$ is given by the union of unstable manifolds
of the $(2n+1)$
 equilibrium solutions. Further the Hausdorff dimension of $\cal{A}$
is $n$ as shown in \cite{hen1}.

Suppose $g: \R \to \R$ is an almost periodic  function and $h \in
L^2((0,\pi))$. Given $\epsilon>0$, consider now the non-autonomously
perturbed equation, for  all $\tau \in \R$,
 \be
\label{ncfeq1}
{\frac {\partial u}{\partial t}} -{\frac {\partial ^2 u}{\partial x^2}}
=\lambda (u - u^3) +\epsilon g(t) h(x), \quad x \in (0, \pi), \quad t>\tau,
\ee
with the boundary condition
\be
\label{ncfeq2}
u(t, 0) = u(t, \pi) =0, \quad  t> \tau,\ee
and the initial condition
\be
\label{ncfeq3}
u(\tau, x) =u_0(x), \quad x \in (0, \pi),
\ee
Set $g_\epsilon (t,x) = \epsilon g(t) h(x)$
for all $t \in \R$  and $x \in (0,\pi)$.
Then it is evident that $g_\epsilon$ satisfies condition
(H4).  Given $\epsilon>0$,  the existence of   pullback attractor
$\{{\cal{A}}_\epsilon (t) \}_{t \in \R}$
 for problem
\eqref{ncfeq1}-\eqref{ncfeq3}  in $H^1_0((0,\pi))$
can be proved by standard arguments, see, e.g., \cite{che1, wan1}.
Note that $g: \R \to \R$ is bounded since it is almost periodic.
Then it is easy to verify that  the attractor
$\{{\cal{A}}_\epsilon (t) \}_{t \in \R}$
indeed satisfies condition (H5)
with $\epsilon_0 =1$.
Applying Theorem 2.1 and Corollary 2.2 to system
\eqref{ncfeq1}-\eqref{ncfeq3} we have
\begin{thm}
\label{thmappl}
Let $n$ be a nonnegative integer and $\lambda \in (n^2, (n+1)^2)$.
Then There is $\epsilon_0>0$ such that for every
$\epsilon \in (0, \epsilon_0)$,  problem
\eqref{ncfeq1}-\eqref{ncfeq3}
has exactly  $2n+1$ almost periodic solutions
$\phi^*_{i,\epsilon}: \R \to  H^1_0((0,\pi))$, $1\le i \le 2n+1$.
The system has an  $n$-dimensional  pullback attractor
$\{{\cal{A}}_\epsilon (t) \}_{t \in \R}$
which is the union of the unstable manifolds of
the almost periodic solutions.
Further, every solution of problem
\eqref{ncfeq1}-\eqref{ncfeq3}
converges to one of those almost periodic  solutions.
\end{thm}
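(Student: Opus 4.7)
The plan is to verify hypotheses (H1)--(H5) for the Chafee--Infante system in the functional setting $X = L^2((0,\pi))$ with $A_0 = -\partial_{xx}$ and $\alpha = 1/2$, so that $X^\alpha = H^1_0((0,\pi))$, and then invoke Theorem \ref{thm1} and Corollary \ref{thm2} directly. Most of the required ingredients are already assembled in the discussion preceding the statement, so the proof is essentially a verification rather than a new argument.

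First I would record that $A_0$ is sectorial on $X$ and that $f(u) = \lambda(u-u^3)$ defines a continuously differentiable map $H^1_0 \to L^2$ that is Lipschitz on bounded sets (using the Sobolev embedding $H^1_0 \hookrightarrow L^\infty$ in dimension one). This places the problem in the abstract framework of Section 2. Then (H1)--(H3) are precisely the facts quoted from \cite{hal1,cha1,hen1,hen2}: existence of a global attractor $\mathcal{A}$, exactly $2n+1$ equilibria all of which are hyperbolic when $\lambda \in (n^2,(n+1)^2)$, and the explicit Lyapunov function $V$ displayed above.

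Next I would verify (H4) for $g_\epsilon(t,x) = \epsilon g(t) h(x)$. Since $g$ is almost periodic it is bounded on $\R$, and since $h \in L^2((0,\pi))$ the multiplication operator $u \mapsto g(t) h$ is independent of $u$, so $\partial g_\epsilon/\partial x \equiv 0$ in $L(X^\alpha, X)$. Therefore the supremum in (H4) is bounded by $\epsilon \|g\|_\infty \|h\|_{L^2}$, which tends to zero as $\epsilon \to 0$. I would also note that $g_\epsilon$ is almost periodic in $t$ uniformly in $x$ because the $x$-dependence factors out. For (H5), I would invoke the standard a priori estimates for the perturbed Chafee--Infante equation (as in \cite{che1,wan1}) to show that the union of pullback attractors for $\epsilon \in (0,1]$ lies in a bounded set of $H^2 \cap H^1_0$, hence in a compact subset $K$ of $H^1_0$; the uniform bound depends only on $\|g\|_\infty \|h\|_{L^2}$, which is controlled by $\epsilon_0 = 1$.

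With (H1)--(H5) in hand, Theorem \ref{thm1} yields an $\epsilon_0 > 0$ such that for every $\epsilon \in (0,\epsilon_0)$ problem \eqref{ncfeq1}--\eqref{ncfeq3} admits exactly $2n+1$ almost periodic solutions $\phi^*_{i,\epsilon}$, each converging uniformly in $t$ to the corresponding equilibrium $x^*_i$ as $\epsilon \to 0$. Corollary \ref{thm2} then gives the representation of the pullback attractor as the union of unstable manifolds $W^u_\epsilon(\phi^*_{i,\epsilon})(t)$, together with the dimension statement (the Hausdorff dimension equals that of $\mathcal{A}$, which is $n$ by \cite{hen1}) and the forward convergence of every solution to one of the $\phi^*_{i,\epsilon}$. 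The only step requiring actual work is the verification of (H5), since (H4) is immediate from the product structure of the perturbation and (H1)--(H3) are quoted; I would expect the compactness in (H5) to follow from a routine smoothing estimate exploiting the analytic semigroup generated by $A_0$.
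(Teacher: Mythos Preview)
Your proposal is correct and follows essentially the same route as the paper: the paper's argument in Section~4 is precisely the verification of (H1)--(H5) in the setting $X=L^2((0,\pi))$, $A_0=-\partial_{xx}$, $X^{1/2}=H^1_0$, followed by a direct appeal to Theorem~\ref{thm1} and Corollary~\ref{thm2}. Your treatment is in fact slightly more explicit (e.g.\ observing that $\partial g_\epsilon/\partial x\equiv 0$ and that the dimension statement comes from the equality of Hausdorff dimensions in Corollary~\ref{thm2}), but the structure and all the cited ingredients are identical.
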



\begin{thebibliography}{99}



\bibitem{hal1}
J.K. Hale,    Asymptotic Behavior of Dissipative Systems,
Math. Surveys Monogr.,   Vol. 25,  Amer. Math. Soc., 1989.


 \bibitem{car1}
 A. N. Carvalho,  J. A. Langa, J.C. Robinson and A. Suarez,
 Characterization of non-autonomous attractors of a perturbed infinite-dimensional
 gradient system,
 {\em J. Differential Equations}, {\bf 236} (2007), 570-603.

 \bibitem{car2}
 A. N. Carvalho   and   J. A. Langa
 Non-autonomous perturbation of autonomous semilinear differential equations:
 continuity of  local stable and unstable manifolds,
 {\em J. Differential Equations}, {\bf 233} (2007),   622-653.


\bibitem{bal1}
 M.E. Ballotti,  J.A.  Goldstein and M.E. Parrott,
 Almost periodic solutions of evolution equations,
 {\em J. Math. Anal. Appl.}, {\bf 138} (1989),  522-536.




\bibitem{che1}
V.V. Chepyzhov and M.I. Vishik,  Attractors  for Equations of Mathematical Physics,
Colloquium Publications, Vol. 47,  Amer. Math. Soc.,  2002.


\bibitem{fin1}
 A.M. Fink,
 Almost Periodic Differential Equations,
 Lecture Notes in Mathematics 377,
 Springer-Verlag, New York,  1974.








  \bibitem{sel1}
 G.R. Sell, Topological Dynamics and Ordinary
 Differential Equations, Van Nostrand Reinhold,
 London, 1971.




 \bibitem{she1}
 W. Shen and Y. Yi,
 Almost automorphic  and almost periodic dynamics in
 skew-product semiflows,
 {\em Mem. Amer. Math. Soc.}, {\bf 136} (1998), No. 647, 1-93.

 \bibitem{she2}
 W. Shen and Y. Yi,
 Dynamics of almost periodic scalar  parabolic equations,
 {\em J. Differential Equations}, {\bf 122} (1995),
 114-136.

  \bibitem{she3}
 W. Shen and Y. Yi,
  Asymptotic almost periodicity   of scalar
  parabolic equations with almost
  periodic time dependence,
  {\em J. Differential Equations}, {\bf 122} (1995),
  373-397.

    \bibitem{she4}
 W. Shen and Y. Yi,
 On minimal sets of scalar  parabolic  equations with
 skew-product structures,
 {\em Trans. Amer. Math. Soc.}, {\bf  347} (1995),
 4413-4431.



 \bibitem{vui1}
 P. Vuillermot,
 Global exponential attractors for a class of
 almost-periodic  parabolic equations
 in $\R^n$,
 {\em Proc.  Amer.  Math. Soc.},
 {\bf 116} (1992), 775-782.




  \bibitem{war1}
 J.R. Ward Jr.,
 Bounded and almost periodic solutions
 of semi-linear parabolic equations,
 {\em Rocky Mountain Journal
 of Mathematics}, {\bf  18} (1988),
 479-494.

 \bibitem{yos1}
T. Yoshizawa, Stability  Theory and the Existence of Periodic Solutions
and Almost Periodic Solutions,
 Springer-Verlag, New York,  1975.

 \bibitem{zai1}
 S. Zaidman,
 Topics  in Abstract Differential Equations II,
 Pitman  Research  Notes in Mathematics Series  321,
 Longman Group Limited, England,
 1995.







\bibitem {hen1}
D. Henry,  Geometric Theory of Semilinear  Parabolic Equations,
Lecture Notes in Mathematics 840,  Springer-Verlag, New York,  1981.


\bibitem{cha1}
 N. Chafee and E.F. Infante,
A bifurcation problem for a nonlinear partial differential equation
of parabolic type,
{\em Applicable  Analysis}, {\bf 4}  (1974), 17-37.



\bibitem{hen2}
D. Henry,
Some infinite-dimensional Morse-Smale systems defined by parabolic
partial differential equations,
{\em J. Differential Equations},
{\bf 59} (1985), 165-205.

\bibitem{wan1}
B.  Wang,
 Pullback attractors for non-autonomous reaction-diffusion equations
  on $\R^n$,
  {\em  Front. Math. China}, {\bf  4}  (2009), 563-583.

\end{thebibliography}
\end{document}